\pdfsuppresswarningpagegroup=1

\documentclass[12pt]{amsart}





\usepackage{amsmath} 
\usepackage{amsthm} 
\usepackage{amssymb} 

\usepackage{microtype} 
\usepackage{pinlabel} 

\usepackage{MnSymbol} 

\usepackage[scaled=0.9]{sourcecodepro} 

\usepackage[hidelinks, pagebackref]{hyperref}

\makeatletter
\define@key{href}{font}{#1}
\makeatother
\usepackage{xpatch}
\newcommand\hrefdefaultfont{\ttfamily}
\xpatchcmd\href{\setkeys{href}{#1}}{\setkeys{href}{font=\hrefdefaultfont,#1}}{}{\fail}

\renewcommand*{\backref}[1]{}
\renewcommand*{\backrefalt}[4]{
  \ifcase #1 
  [No citations.]
  \or [#2]
  \else [#2]
  \fi }

\let\originalleft\left
\let\originalright\right
\renewcommand{\left}{\mathopen{}\mathclose\bgroup\originalleft}
\renewcommand{\right}{\aftergroup\egroup\originalright}






\newcommand{\calB}{\mathcal{B}}

\newcommand{\calF}{\mathcal{F}}

\newcommand{\calT}{\mathcal{T}}





\newcommand{\from}{\colon} 
 














\newcommand{\cover}[1]{{\widetilde{#1}}}

\newcommand{\corners}{\operatorname{corners}}

 
\newcommand{\bdy}{\partial} 




















\input{header_article.tex}









\theoremstyle{plain}
\newtheorem{XXXtheoremQED}[equation]{Theorem} 
  {\pushQED{\qed}\begin{XXXtheoremQED}}
  {\popQED\end{XXXtheoremQED}}


%
\newcommand{\fakeenv}{} 

\newenvironment{restate}[2]  
{ 
 \renewcommand{\fakeenv}{#2} 
 \theoremstyle{plain} 
 \newtheorem*{\fakeenv}{#1~\ref{#2}} 
 \begin{\fakeenv}
}
{
 \end{\fakeenv}
}

\newenvironment{restated}[2]  
{ 
 \renewcommand{\fakeenv}{#2} 
 \theoremstyle{definition} 
 \newtheorem*{\fakeenv}{#1~\ref{#2}} 
 \begin{\fakeenv}
}
{
 \end{\fakeenv}
}








\usepackage{mathtools}

\usepackage{enumitem}
\usepackage{color}
\usepackage{subfig, caption}
\usepackage{wrapfig}
\captionsetup{margin=0pt,font=small}
\usepackage{pdflscape}
\usepackage{array}


\newcommand{\ind}{\operatorname{ind}}
\newcommand{\cusps}{\operatorname{cusps}}

\title[Essential loops]{Essential loops in taut ideal triangulations}     

\author[Schleimer]{Saul Schleimer}
\address{\hskip-\parindent
        Mathematics Institute\\
        University of Warwick\\
        Coventry CV4 7AL, United Kingdom}
\email{s.schleimer@warwick.ac.uk}

\author[Segerman]{Henry Segerman} 
\address{\hskip-\parindent
        Department of Mathematics\\
        Oklahoma State University\\
        Stillwater, OK, 74078, USA}
\email{segerman@math.okstate.edu}

\thanks{This work is in the public domain.}

\date{\today}

\begin{document}

\begin{abstract}
In this note we combinatorialise a technique of Novikov.  
We use this to prove that, in a three-manifold equipped with a taut ideal triangulation, 
any vertical or normal loop is non-trivial in the fundamental group. 
\end{abstract}

\maketitle



\section{Introduction}
\label{Sec:Intro}

The notion of a taut ideal triangulation of a three-manifold is due to Lackenby~\cite{Lackenby00}.  
He combinatorialised the angle structures introduced independently by Casson and by Rivin~\cite{Rivin94}.  
They in turn linearised the geometric triangulations of Thurston~\cite{Thurston78}.
Each of these structures plays an important role in modern low-dimensional topology.  In particular, taut ideal triangulations have a strong connection to the subject of taut foliations, introduced by Gabai~\cite{Gabai83}, 
and to that of taut branched surfaces, due to Oertel~\cite{Oertel86}.  
In addition to the results of Lackenby, taut ideal triangulations play a central role in the theory of layered triangulations.  One spectacular contribution has been as a prerequisite for Agol's theory of veering triangulations~\cite{Agol11}.  

Novikov~\cite[Theorem~6.1]{Novikov65} gives one of the early applications of foliations to the study of the fundamental group of a manifold.  He starts with a loop $\delta$ in good position with respect to a foliation $\calF$.  He further supposes that $H \from D \to M$ is a null-homotopy of $\delta$, also in good position.  Pulling back, he obtains a singular foliation $H^{-1}(\calF)$ on the disk $D$.  The Poincar\'e--Hopf theorem gives combinatorial control of the singularities, which translates to topological control over the homotopy.  Morally, the positivity of the Euler characteristic of the disk constrains the position of $\delta$.  We refer to Candel and Conlon~\cite[Chapter~9]{CandelConlon03} for a history of the subject and for detailed proofs. 

We introduce a combinatorial version of the Novikov technique; instead of pulling back a foliation we pull back a taut ideal triangulation.  This gives a train track with stops in the disk $D$.  We so obtain a very simple proof of a variant of one of Novikov's results.   That is, suppose that $M$ is a three-manifold, equipped with a taut ideal triangulation $\calT$.  Let $\calB = \calT^{(2)}$ be the resulting branched surface in $M$. 

\begin{restate}{Theorem}{Thm:VerticalLoopsEssential}
Any loop $\delta$ in $M$ which is vertical with respect to $\calB$ is non-trivial in $\pi_1(M)$. 
\end{restate}

There is also an indirect proof of this using Novikov's original technique~\cite[Theorem~4.35(3)]{Calegari07}, once we observe that $\calB$ carries an essential lamination which extends to a taut foliation of $M$ (see \cite[Example~5.1]{GabaiOertel89} as well as \cite[page~373]{Lackenby00}).  

Using our techniques we also obtain a new result, as follows.

\begin{restate}{Theorem}{Thm:NormalLoopsEssential}
Any loop $\gamma$ in $M$ which is normal with respect to $\calB$ is non-trivial in $\pi_1(M)$. 
\end{restate}

The proof of \refthm{NormalLoopsEssential} is more delicate than that of \refthm{VerticalLoopsEssential}; new behaviour near the boundary of $D$ must be dealt with.  

From Theorems~\ref{Thm:VerticalLoopsEssential} and~\ref{Thm:NormalLoopsEssential} we deduce that vertical, and also normal, loops are infinite order in the fundamental group.  Note that this is a bit weaker than the conclusion in the comparable situation of a train track $\tau$ in a surface -- there loops dual to, or carried by, $\tau$ are not only non-trivial but also non-peripheral.  

We have a simple corollary of \refthm{NormalLoopsEssential}.  
Let $\cover{M}$ be the universal cover of $M$ and let $\cover{\calB}$ be the resulting branched surface.

\begin{corollary}
Suppose that $F$ is a connected surface (perhaps with boundary) carried by $\cover{\calB}$ and realised as a (perhaps finite) union of faces of $\cover{\calB}$.  Then $F$ is a disk. \qed
\end{corollary}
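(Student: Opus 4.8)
The plan is to argue by contradiction: assuming that $F$ is not a disk, we extract an essential simple closed curve in $F$, put it in normal position inside $F$, project it to $M$, and contradict \refthm{NormalLoopsEssential}.

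So suppose $F$ is not a disk. Every face of $\cover{\calB}$ is an ideal triangle, so each face is a non-compact subset of $\cover{M}$ that is closed in $\cover{M}$; hence any non-empty union of faces is non-compact, and in particular $F$ is a non-compact connected surface. As $F$ is not a disk we have $\pi_1(F) \neq 1$, and a non-simply-connected surface always contains an essential simple closed curve $\gamma$, that is, one with $[\gamma] \neq 1$ in $\pi_1(F)$. Since $F$ is realised as a union of faces it inherits the cell structure coming from $\cover{\calT}$, and the standard normalisation procedure -- whose moves are each supported in a single face of $F$ -- isotopes $\gamma$, within $F$, so that it becomes transverse to $\cover{\calT}^{(1)}$ and meets every face in normal arcs. (The simple closed curves in $F$ to which this procedure does not directly apply are the inessential ones and those encircling an ideal vertex of $F$; but a vertex-linking curve is itself already normal.) After the isotopy $\gamma$ still crosses $\cover{\calT}^{(1)}$ -- otherwise it would lie in a single face and be inessential in $F$ -- so $\gamma$ is a homotopically non-trivial loop that is normal with respect to $\cover{\calB}$.

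Let $p \from \cover{M} \to M$ be the covering projection, so that $\cover{\calB} = p^{-1}(\calB)$ and $\cover{\calT} = p^{-1}(\calT)$. Normality is a local condition, so $\bar\gamma \defeq p \circ \gamma$ is a loop in $M$ which is normal with respect to $\calB$, and \refthm{NormalLoopsEssential} shows that $\bar\gamma$ is essential in $\pi_1(M)$. On the other hand $\gamma$ is a loop in the simply connected space $\cover{M}$, hence null-homotopic there; composing a null-homotopy with $p$ shows that $\bar\gamma$ is null-homotopic in $M$. This contradiction forces $F$ to be a disk.

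The only step that calls for genuine care is the normalisation: one must check that $\gamma$ can be brought into normal position by an isotopy supported in $F$ itself -- in particular treating the curves that encircle the ideal vertices of $F$ -- so that the loop $\bar\gamma$ fed to \refthm{NormalLoopsEssential} is genuinely a homotopically non-trivial normal loop. The rest of the argument is formal.
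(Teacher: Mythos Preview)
Your argument is correct and is exactly the route the paper has in mind: the corollary is stated with a bare \qed, and the intended deduction from \refthm{NormalLoopsEssential} is precisely the one you spell out --- an essential loop in $F$, once normalised, projects to a normal loop in $M$ that is null-homotopic because $\cover{M}$ is simply connected, contradicting the theorem.
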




\subsection*{Previous work} 
Gabai and Oertel prove that laminations carried by essential branched surfaces are $\pi_1$--injective~\cite[Lemma~2.7]{GabaiOertel89}.  Our \refthm{NormalLoopsEssential} is both more and less general than their work.  We do not require a lamination.  They do not require the manifold to be cusped. 

Calegari~\cite[Remark~5.6]{Calegari00} gives a very different combinatorial version of \refthm{VerticalLoopsEssential}, in the closed case.  He introduces the notion of a \emph{local orientation}; this is, in a sense, dual to having a transverse taut branched surface $\calB \subset M$ where all components of $M - \calB$ are taut balls. 


\subsection*{Acknowledgements} 
We thank Marc Lackenby for helpful conversations. 
The second author was supported in part by National Science Foundation grant DMS-1708239.

\section{Background}
Throughout the paper we will use $M$ to denote a compact connected manifold with non-empty boundary.  All boundary components will be tori or Klein bottles.  Suppose that $\calT$ is a three-dimensional triangulation; that is, a collection of \emph{model tetrahedra} and a collection of face pairings.  We will also call the faces of a model tetrahedron \emph{model faces}, and similarly for its edges and vertices.

Let $|\calT|$ be the quotient space: that is, we take the disjoint union of the model tetrahedra of $\calT$ and identify model faces using the face pairings.  
Let $\calT^{(k)}$ be the $k$--skeleton of $|\calT|$.  Let $n(\calT^{(0)})$ be an open regular neighbourhood of the vertices of $\calT$.   We call $\calT$ a \emph{ideal triangulation} of $M$ if $|\calT| - n(\calT^{(0)})$ is homeomorphic to $M$. 

A \emph{taut angle structure} on $\calT$ is an assignment of dihedral angles, zero or $\pi$, to each model edge in $\calT$.  The assignment is required to obey two conditions.  The \emph{edge equalities} state that, for an edge $e \in \calT^{(1)}$, the sum of the dihedral angles of its models is $2\pi$.  The \emph{triangle equalities} state that, for any model vertex, the sum of the dihedral angles of the three adjacent model edges is $\pi$.  We say that the tetrahedra of $\calT^{(3)}$ are \emph{taut}.  See \reffig{TautTet}.

\begin{figure}[htbp]
\centering
\subfloat[A taut tetrahedron.]{
\labellist
\small\hair 2pt
\pinlabel {$0$} at 20 130
\pinlabel {$0$} at 240 120
\pinlabel {$0$} at 135 27
\pinlabel {$0$} at 135 217
\pinlabel {$\pi$} at 125 140
\pinlabel {$\pi$} at 125 87
\endlabellist
\includegraphics[width=0.35\textwidth]{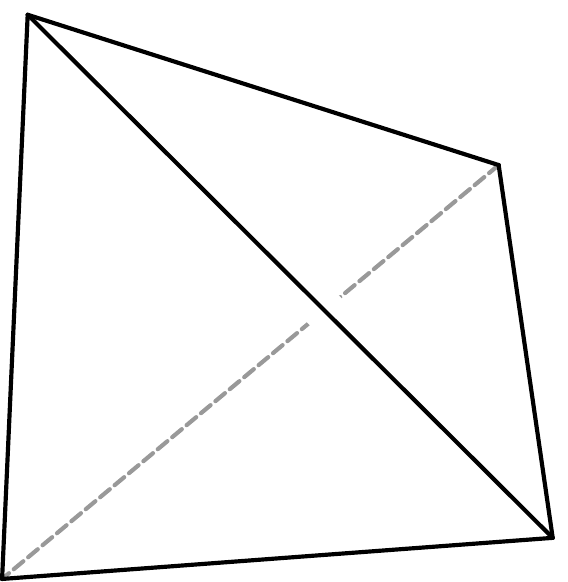}
\label{Fig:TautTet}
}
\quad 
\subfloat[All faces meeting a single edge in $\calB$.]{
\raisebox{0.85cm}{\includegraphics[width=0.55\textwidth]{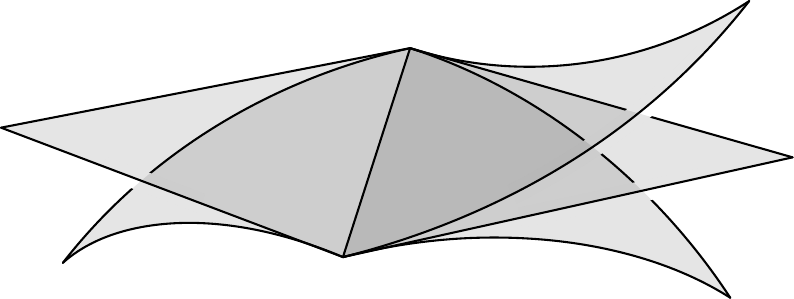}}
\label{Fig:TautEdge}
}
\caption{}
\label{Fig:Taut}
\end{figure}

We deduce that every taut tetrahedron has four edges with dihedral angle zero.  We call the union of these four edges the \emph{equator} of the taut tetrahedron.  

Suppose now that $e$ is an edge of $\calT^{(1)}$.  There are exactly two model edges for $e$ with angle $\pi$; all others are zero.  Obeying these dihedral angles, we isotope the two-skeleton $\calT^{(2)}$ to obtain a smooth \emph{branched surface} $\calB$.  See \reffig{TautEdge}.  Some references would call $\calB$ a non-generic branched surface without vertices.  See for example~\cite[Section~6.3]{Calegari07}.



\begin{definition}
Suppose that $\delta$ is a smooth embedded loop in $M$.
Suppose that $\delta$ is transverse to, and meets, $\calB$.  
Suppose that for every tetrahedron $t$ we have that every arc $d$ of $\delta \cap t$ \emph{links} the equator of $t$.  
(That is, the endpoints of $d$ are separated in $\bdy t$ by the equator of $t$.)  
Then we say that the loop $\delta$ is \emph{vertical} with respect to $\calB$. 
\end{definition}

\begin{definition}
\label{Def:NormalLoop}
Suppose that $\gamma$ is a smooth loop immersed in $\calB$. 
Suppose that $\gamma$ is transverse to, 
and meets, 
the edges of $\calB$.  
Suppose that for every model face $f$ of $\calB$ and for every component $J$ of $\gamma^{-1}(f)$, the arc $\gamma|J$ is \emph{normal} in $f$.  
(That is, the endpoints of $\gamma|J$ lie in distinct edges of $f$.)  
Then we say that the loop $\gamma$ is \emph{normal} with respect to $\calB$.
\end{definition}

\section{Combinatorics of null-homotopies}

Suppose that $\delta$ is a loop in $M$ which is transverse to the branched surface $\calB$.  Let $D = D^2$ be the unit disk with the usual orientation.  Suppose that $H \from D \to M$ is a null-homotopy of $\delta$.  We homotope $H$ relative to $\bdy D$ to make $H$ transverse to $\calB$.

We define $\tau = H^{-1}(\calB)$.  Thus $\tau$ is a \emph{train track} in $D$.   The \emph{switches} of $\tau$ are exactly the points of $H^{-1}(\calB^{(1)})$.  The \emph{stops} of $\tau$ are exactly the points of $(H|\bdy D)^{-1}(\calB)$.  
The standard reference for train tracks is \cite{PennerHarer92}; we also rely on \cite{Mosher03}.  We note that our track $\tau$ does not satisfy the so-called ``geometry-condition''~\cite[page~5]{PennerHarer92},~\cite[page~52]{Mosher03}.  

We call a connected component $R$ of $D - \tau$ a \emph{region}.  Let $\cusps(R)$ and $\corners(R)$ count the number of (necessarily outwards) cusps and corners on the boundary of $R$.  As a bit of terminology, we divide $\bdy R$ into \emph{sides}: these are the components of $\bdy R$ minus all outward cusps and corners.  Note that a side $s$ of $R$ may be a union of several branches of $\tau$.  

We define the \emph{index} of $R$ to be 
\[
\ind(R) = \chi(R) - \frac{\cusps(R)}{2} - \frac{\corners(R)}{4}
\] 
In \reftab{DiskIndices} we give pictures of, and names to, all possible disk regions with non-negative index.  Note that index is additive under taking the union of regions~\cite[page~57]{Mosher03}.
Thus the sum of the indices of the regions of $D - \tau$ is exactly $\chi(D)$; that is, one.  We deduce from this that there is at least one region $R$ with positive index. 

\def\w{0.17}
\def\m{0.16}
\setlength\tabcolsep{2pt}
\newcolumntype{V}[1]{>{\centering\arraybackslash} m{#1\textwidth} <{\vspace{-1pt}}}
\begin{table}[htbp]
\centering
\begin{tabular}{V{0.22}|V{\w}V{\w}V{\w}}
  $\corners \backslash \textrm{index}$ & $1$ & $1/2$ & $0$ \\
\hline \\ [-12pt]
    0	& \includegraphics[width=\m\textwidth]{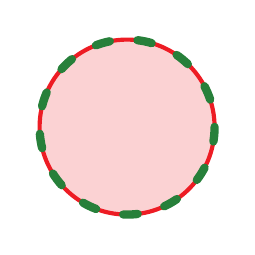} 
	& \includegraphics[width=\m\textwidth]{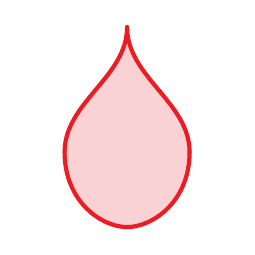} 
	& \includegraphics[width=\m\textwidth]{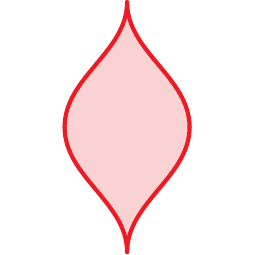} \\
    2	&  
	& \includegraphics[width=\m\textwidth]{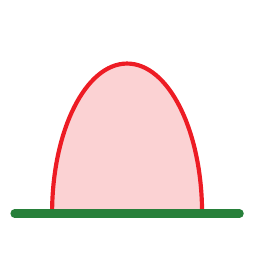} 
	& \includegraphics[width=\m\textwidth]{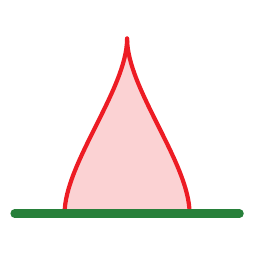} \\
   4	& 
	&
 	& \includegraphics[width=\m\textwidth]{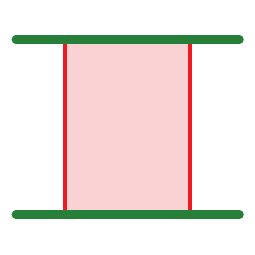} \\
\end{tabular}
\vspace{15pt}
\caption{Disk regions with non-negative index, organised by the number of their corners.  
These are named as follows: \emph{nullgon}, \emph{cusped monogon}, \emph{cusped bigon}, \emph{boundary bigon}, \emph{boundary trigon}, and \emph{rectangle}.
If $\tau$ is empty then $D$ is called a \emph{complete nullgon}.
}
\label{Tab:DiskIndices}
\end{table}

Let $r(H)$ be the number of regions of $D - \tau$.  Over all null-homotopies of $\delta$, transverse to $\calB$, we choose $H$ to minimise $r(H)$.   We call such an $H$ \emph{minimal}. 

\begin{lemma}
\label{Lem:Minimality}
Suppose that $\delta$ is a loop in $M$ transverse to $\calB$.  
Suppose that $H \from D \to M$ is a minimal null-homotopy of $\delta$.  
Let $\tau = H^{-1}(\calB)$.  
Suppose that $R$ is a region of $D - \tau$. 
Then we have the following. 
\begin{enumerate}
\item
\label{Itm:Disk}
$R$ is a disk. 
\item
\label{Itm:NormalPlus}
Suppose that $s$ is a side of $R$.  
Then the interior of $s$ meets at most one switch. 
\item
\label{Itm:Nullgon}
If $R$ is a nullgon, then $R = D$ is a complete nullgon.
\item
\label{Itm:Monogon}
$R$ is not a cusped monogon. 
\end{enumerate}
Thus, the positive index regions of $D$ are either a single complete nullgon, or a 
non-empty collection of boundary bigons.
\end{lemma}

\begin{proof}
\mbox{}
\begin{enumerate}
\item
If $R$ has topology, then we can compress it into the containing tetrahedron and reduce $r(H)$. 
\item
Suppose that the interior of $s$ meets at least two switches. 
All such switches in the interior of $s$ are preimages under $H$ of a single edge. 
Hence there is a branch $b \subset \tau$ so that $H(b)$ is a non-normal arc.  
We homotope $H$ in a neighbourhood of $b$ to make $H(b)$ simple. 
This done, $H(b)$ cuts a bigon $B$ off of the face containing $H(b)$. 
We then homotope $H$ across $B$. 
This does not increase $r(H)$.  
If $r(H)$ does not decrease, then this move disconnects $\tau$, and creates a region with topology, contradicting \refitm{Disk}.
\item
Suppose that $R$ is a nullgon, not equal to $D$.  
If $H(\bdy R)$ is disjoint from $\calB^{(1)}$ then the region adjacent to $R$ is not a disk, contradicting \refitm{Disk}.  
It follows that $\bdy R$ consists of an even number of branches of $\tau$ (alternating between the two faces of a tetrahedron $t$ on either side of a $\pi$-edge of $t$).  
But this contradicts \refitm{NormalPlus}.
\item
Suppose that $t$ is the taut tetrahedron containing $H(R)$.  
Let $s$ be the boundary of $R$.  
We deduce that the loop $s$ crosses the equator of $t$ exactly once, a contradiction.  
\end{enumerate}
Thus, the only possible positive index regions are complete nullgons and boundary bigons. 
\end{proof}

Equipped with this we can now prove the following. 

\begin{theorem}
\label{Thm:VerticalLoopsEssential}
Let $(M, \calT)$ be a three-manifold equipped with a taut ideal triangulation.  Let $\calB = \calT^{(2)}$ be the resulting branched surface in $M$.  Any loop $\delta$ in $M$ which is vertical with respect to $\calB$ is non-trivial in $\pi_1(M)$. 
\end{theorem}

\begin{proof}
Suppose that $H \from D \to M$ is a minimal null-homotopy of the vertical loop $\delta$.  Applying \reflem{Minimality}, there must be a region $R$ of $D - \tau$ which is a boundary bigon.  Let $t$ be the tetrahedron containing $H(R)$.  Let $d = \bdy R \cap \bdy D$ and let $s = \bdy R - d^\circ$.  From the definition of vertical, we have that $H(d)$ links the equator of $t$.  Therefore $H(s)$ crosses the equator of $t$ an odd number of times, and thus at least once.  This contradicts the fact that $\bdy R$ has no cusps. 
\end{proof}

\section{Transverse taut}

In order to prove \refthm{NormalLoopsEssential}, we will use the following strengthening of the notion of a taut structure. A \emph{transverse taut structure} on $\calT$ is a taut structure together with a co-orientation on $\calB$ with the following property.  If model faces $f$ and $f'$ of a model tetrahedron $t$ share a common model edge $e$, then 
\begin{itemize}
\item the edge $e$ is part of the equator of $t$ if and only if exactly one of the co-orientations on $f$ and $f'$ points into $t$.
\end{itemize}
\noindent
See \reffig{TransverseTet}.  It follows that the co-orientations on faces incident to an edge change direction precisely twice as we go around an edge.  See \reffig{TransverseEdge}. 

\begin{figure}[htbp]
\centering
\subfloat[Co-orientations and angles in a transverse taut tetrahedron.]{
\labellist
\small\hair 2pt
\pinlabel {$0$} at 20 130
\pinlabel {$0$} at 240 120
\pinlabel {$0$} at 135 27
\pinlabel {$0$} at 135 217
\pinlabel {$\pi$} at 125 140
\pinlabel {$\pi$} at 125 87
\endlabellist
\includegraphics[width=0.35\textwidth]{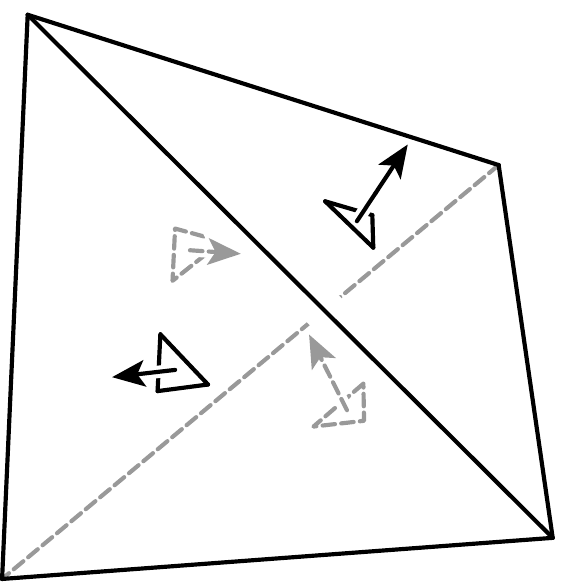}
\label{Fig:TransverseTet}
}
\quad
\subfloat[Co-orientations around an edge.]{
\raisebox{0.85cm}{\includegraphics[width=0.55\textwidth]{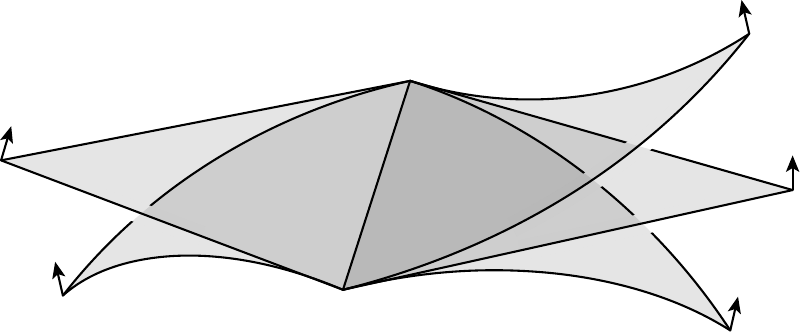}}
\label{Fig:TransverseEdge}
}
\caption{}
\label{Fig:Transverse}
\end{figure}

Suppose that $\calT$ is an ideal triangulation of a manifold $M$ equipped with a taut structure.  We now construct a triangulation $\cover{\calT}$ of a double cover $\cover{M}$ of $M$.  By construction, the lift of the taut structure on $\calT$ to $\cover{\calT}$ will support a transverse taut structure. 

For each taut tetrahedron $t$ of $\calT$, we arbitrarily label the two model edges with dihedral angle $\pi$ as $e'$ and $e''$. In $\cover{\calT}$, we have two taut tetrahedra $t'$ and $t''$ corresponding to $t$. We assign a co-orientation to the model faces of $t'$ and $t''$ in such a way that the co-orientation points into the tetrahedron on the two model faces of $t^*$ incident to $e^*$. Now suppose that $t_i$ and $t_j$ are tetrahedra of $\calT$, glued to each other along model faces $f_i$ and $f_j$.  In $\cover\calT$ we have tetrahedra $t'_i$, $t''_i$, $t'_j$, and $t''_j$, with model faces $f'_i$, $f''_i$, $f'_j$, and $f''_j$, respectively.
 
We glue $t'_i$ to either $t'_j$ or $t''_j$ as the co-orientation on $f'_i$ agrees with $f'_j$ or $f''_j$.  We similarly glue $t''_i$ to the remaining copy of $t_j$.  Having made all such gluings, the resulting triangulation $\cover{\calT}$ has a transverse taut structure by construction.  It has one component if and only if the taut structure on $\calT$ does not support a transverse taut structure. 

\section{Proof of the main result}

\begin{theorem}
\label{Thm:NormalLoopsEssential}
Let $(M, \calT)$ be a three-manifold equipped with a taut ideal triangulation.  Let $\calB = \calT^{(2)}$ be the resulting branched surface in $M$.  Any loop $\gamma$ in $M$ which is normal with respect to $\calB$ is non-trivial in $\pi_1(M)$. 
\end{theorem}

\begin{proof}
Suppose for a contradiction, that the normal loop $\gamma$ is null-homotopic.  Thus $\gamma$ lifts to a normal loop in any cover.  Thus, without loss of generality, we may assume that the taut structure on $\calT$ supports a transverse taut structure.  This gives us a local notion of \emph{upwards}.  In particular,  every model tetrahedron has two lower faces and two upper faces, separated by its equator.

\reflem{Minimality} does not apply directly to a normal loop $\gamma$.  So, let $A$ be a model annulus with horizontal boundary circles $\bdy_0 A \sqcup \bdy_1 A$.  Let $G$ be a small smooth homotopy $G \from A \to M$, moving $\gamma$ slightly upwards.  That is, $G(\bdy_0 A) = \gamma$ and we define $\delta = G(\bdy_1 A)$.  We ensure that $G$ is transverse to $\calB$ away from $\bdy_0 A$; also, we arrange that for each vertical interval $J$ in $A$ the tangents to $G(J)$ point upwards.  We will apply \reflem{Minimality} to $\delta$. 

We call $\delta$ a \emph{raised curve}.  We call the components of $\delta - \calB$ \emph{raised arcs}.  There are six \emph{types} of raised arc.  These are shown in \reffig{RaisedArcs}. 
There is a cellulation of $A$ with one-skeleton $\bdy A \cup G^{-1}(\calB)$. Suppose that $C$ is a two-cell. Let $c = C\cap \bdy_0 A$ and $d = C\cap \bdy_1 A$. Thus $G(c) \subset \gamma$ and $G(d)\subset \delta$. We say that $G(c)$ is the \emph{lowering} of the raised arc $G(d)$. We record this by the \emph{lowering map}, $L$, where $L(G(d)) = G(c)$. Note that $G(c)$ may be either a single vertex, a single normal arc, or two normal arcs.  Again, see \reffig{RaisedArcs}.

\begin{figure}[htbp]
\labellist
\small\hair 2pt
\pinlabel {$\textsc{a}_3$} at 405 152
\pinlabel {$\textsc{a}_2$} at 363 224
\pinlabel {$\textsc{a}_1$} at 342 280
\pinlabel {$\textsc{b}_1$} at 63 172
\pinlabel {$\textsc{b}_2$} at 70 98
\pinlabel {$\textsc{c}$} at 215 98
\endlabellist
\includegraphics[height = 5.5 cm]{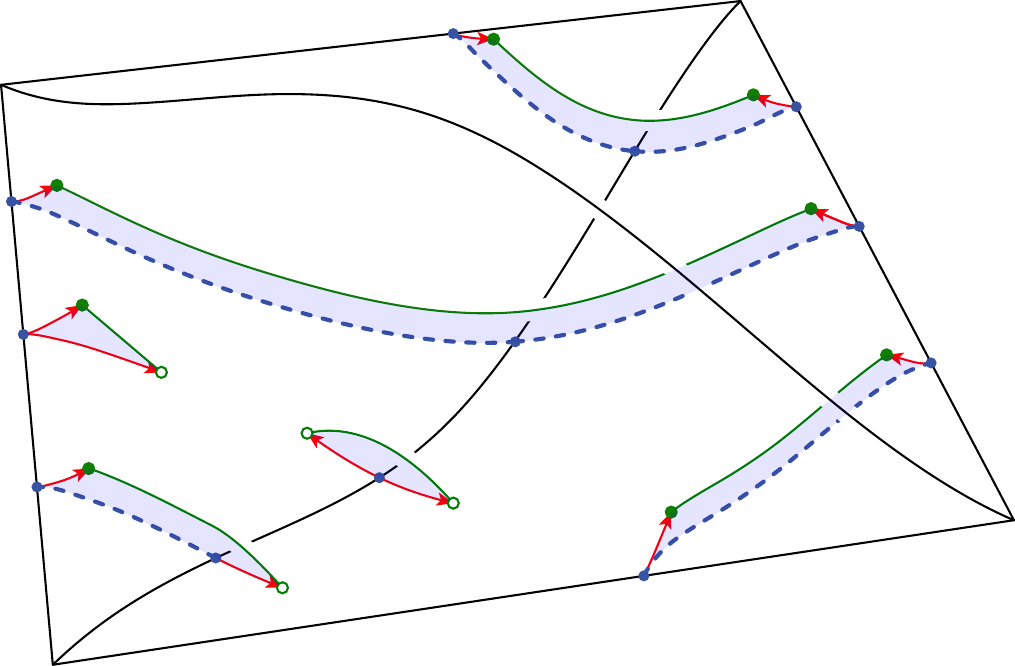}
\caption{A taut tetrahedron containing the six possible types of raised arcs of $\delta$.  These are drawn in (solid) green. The normal arcs (or points) of $\gamma$, namely the lowerings of the raised arcs, are drawn in (dashed) blue.  Images of the two-cells of the annulus $A$ are shaded in light blue.  Filled green dots indicate endpoints of raised arcs on the top two faces of the tetrahedron; open green dots indicate endpoints on the bottom two faces.}
\label{Fig:RaisedArcs}
\end{figure}


Suppose that $H \from D \to M$ is a minimal null-homotopy of $\delta$. 
Recall that $\tau = H^{-1}(\calB)$.
Applying \reflem{Minimality}, we find that $D - \tau$ has at least two boundary bigons.  
Applying another small homotopy, we can retain minimality and also make $H$ transverse to $\gamma$. 


Pulling back the transverse taut structure on $\calB$ by $H$ gives a transverse orientation on the branches of $\tau$ which is consistent across switches.  Thus, for any region $R$ of $D - \tau$ and for any side $s$ of $R$, the transverse orientation on $s$ points either into, or out of, $R$.  This gives us a classification of boundary bigons.  Suppose that $R$ is a boundary bigon and $s = \bdy R - \bdy D$ is its side in $\tau$.  If the transverse orientation on $s$ points out of $R$ then we call $R$ a \emph{min-bigon}.  If it points into $R$ we call $R$ a \emph{max-bigon}.  

\subsection{Min-bigons}
\label{Sec:MinBigon}

Suppose that $R$ is a min-bigon.  We move $\gamma$ up, across $H(R)$, to obtain $\gamma'$.  We appeal to \reflem{Minimality}\refitm{NormalPlus} to ensure that $\gamma'$ is normal.  Let $\delta'$ be the corresponding raised loop and let $H'$ be the new null homotopy.  See \reffig{PushOverMinBigon}. 

The loop $\gamma'$ may be shorter than, the same length as, or longer than $\gamma$ (see types $\textsc{a}_1, \textsc{a}_2$ and $\textsc{a}_3$ in \reffig{RaisedArcs}).  However, $H'$ has exactly one fewer region.  That is, $r(H') = r(H) - 1$.    We repeat this process until there are no more min-bigons.

\begin{figure}[htbp]
\includegraphics[width=\textwidth]{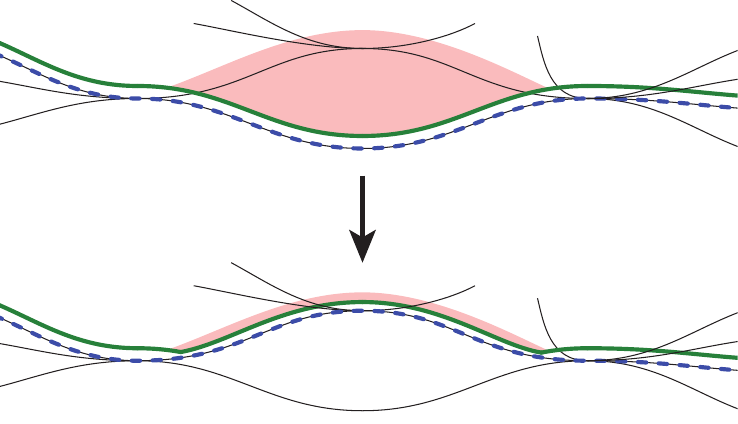}
\caption{Pushing over a min-bigon of type $A_3$.}
\label{Fig:PushOverMinBigon}
\end{figure}

\subsection{Max-bigons}
\label{Sec:MaxBigon}
Suppose that $R_0$ is a max-bigon. Unlike the situation of a min-bigon, a max-bigon does not give us a simple move to reduce complexity. The asymmetry stems from the fact that we raised $\gamma$ rather than lowered it. Instead, our plan is to uniquely associate to $R_0$ two small subregions of $D - \tau$, each with index $-1/4$.  This will imply that the index of $D$ is at most zero.  This contradiction finally proves \refthm{NormalLoopsEssential}.

We begin as follows.  Let $s$ be the side of $R_0$ in $\tau$.  Let $d_0 = \bdy R_0 - s \subset \bdy D$. We give $d_0$ the (tangential) orientation it receives from $D$.  In \reffig{DownhillFromMaxBigon}, this orientation will point left.  Note that $H(d_0)\subset \delta$ is a raised arc.  Let $c_0 = L(H(d_0))$ be its lowering.  

\newpage
\begin{claim}
\label{Clm:Base}
\mbox{}
\begin{itemize}
\item
The raised arc $H(d_0)$ has type $\textsc{c}$.  
\item
The side $s$ meets exactly one switch $c'_0$ of $\tau$.  
\item 
The vertices $c_0$ and $H(c'_0)$ cobound a sub-edge $\epsilon_0 \subset \calB^{(1)}$. 
\end{itemize}
\end{claim}

\begin{proof}
Let $t_0$ be the tetrahedron containing $H(R_0)$.  By the definition of a max-bigon the transverse orientation on $s$ points into $R_0$.  Thus each corner of $H(R_0)$ is contained in a lower face of $t_0$.  Consulting \reffig{RaisedArcs} we deduce that $H(d_0)$ is of type $\textsc{c}$.  Thus each corner of $H(R_0)$ is contained in its own lower face of $t_0$.  We deduce that $s$ meets at least one switch of $\tau$.
By \reflem{Minimality}\refitm{NormalPlus} the side $s$ meets exactly one switch, which we call $c'_0$.  

Since $H$ is transverse to $\gamma$, the vertices $c_0$ and $H(c'_0)$ are distinct.  They are contained in the same edge of $\calB^{(1)}$: namely the bottom edge $e_0$ of $t_0$. In $e_0$ they cobound a sub-edge, which we call $\epsilon_0$. 
\end{proof}

\begin{figure}[htbp]
\labellist
\small\hair 2pt
\pinlabel $\delta$ at 0 85
\pinlabel $\gamma$ at 9 34
\pinlabel $a$ at 9 68
\pinlabel $d_0$ at 50 92
\pinlabel $d_1$ at 95 92
\pinlabel $d_2$ at 112 92
\pinlabel $d_3$ at 130 92
\pinlabel $d_4$ at 148 92
\pinlabel $d_5$ at 166 92
\pinlabel $\cdots$ at 194 92
\pinlabel $d_K$ at 222 92
\pinlabel $b_0$ at 77 68
\pinlabel $b_1$ at 94 68
\pinlabel $b_2$ at 114 68
\pinlabel $b_3$ at 131 68
\pinlabel $b_4$ at 148 68
\pinlabel $b_5$ at 166 68
\pinlabel $b_{K-1}$ at 214 45
\pinlabel $c_0$ at 57 41
\pinlabel $s_\triangleright$ at 238 45
\pinlabel $\epsilon_0$ at 46 22
\pinlabel $c'_0$ at 46 0
\pinlabel $c'_2$ at 65 0
\pinlabel $c'_3$ at 83 0
\pinlabel $c'_5$ at 101 0
\pinlabel $\cdots$ at 136 0
\pinlabel $c'_K$ at 176 0
\endlabellist
\includegraphics[width = \textwidth]{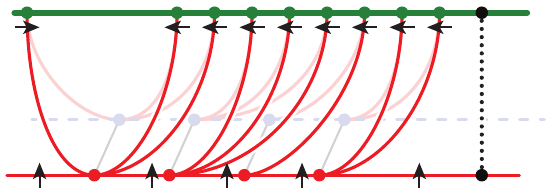}
\caption{A possible picture of part of the annulus $A$ (in back), the bigons $B_k$ (in front), and the homotopies $F_k$ (bottom).  To lighten the notation in this figure, we have omitted applying $H$ to labels of subsets of $D$. 
Transverse orientations on the branches $b_i$ are shown with arrows. Note that $\epsilon_0=\epsilon_1$, $c'_0=c'_1$, $c'_3=c'_4$, and so on.}
\label{Fig:DownhillFromMaxBigon}
\end{figure}

Let $a$ and $b_0$ be the components of $s - c'_0$, where $b_0$ meets the right endpoint of $d_0$. See the leftmost region on \reffig{DownhillFromMaxBigon}. 

Now consider a sequence of regions $R_0, R_1, \ldots, R_n$ that meet $\bdy D$ in the sides $d_0, d_1, \ldots d_n$ as we move along $\bdy D$ to the right. Define $c_k = L(H(d_k)) \subset \gamma$: the lowering of the raised arc $H(d_k)$.  Define $\gamma_k = \cup_{i = 0}^k c_i \subset \gamma$.

Let $b_i$ be the branch of $\tau$ that meets $\bdy D$ at the right corner of $R_i$.  We now choose $n=N$ such that $b_{i-1}$ and $b_i$ have the same transverse orientation for $1 \leq i < N$, while they have opposite transverse orientations for $i=N$. Thus, $R_N$ is at a \emph{local minimum} of $\gamma$, and we are going downhill to it from the \emph{local maximum} at $R_0$.  This downhill condition implies that for $i \in [1,N)$, the raised arc $H(d_i)$ is of type either $\textsc{b}_1$ or $\textsc{b}_2$.  Again, see \reffig{RaisedArcs}.  

Recall that all positive index regions are now max-bigons. Thus none of the $R_i$ can have positive index for $i > 0$.
Let $K$ be the smallest number for which $R_K$ has negative index, or if there is none, then set $K = N$.  

\begin{claim}
\label{Clm:NotTrigon}
The region $R_K$ is not a boundary trigon. 
\end{claim}
\begin{proof}
If $K<N$ then by definition, $R_K$ has negative index, and so is not a boundary trigon. If $K=N$ then $R_N$ cannot be a boundary trigon since the transverse orientations on the two sides of a boundary trigon must agree, yet $R_N$ is at a local minimum of $\gamma$.
\end{proof}

For all $k \in [0, K)$ we define the union $B_k = \cup_{i = 0}^k R_i$.  Define $\gamma'_k = \bdy B_k - (a^\circ \cup b^\circ_k \cup \bdy D)$.  (Unlike in \refsec{MinBigon}, here $\gamma'_k$ is a push-off of only a section of $\gamma$.)

\begin{definition}
\label{Def:TransverseHomotopy}
Suppose that $g, h \from [0,1] \to \calB$ are paths. Suppose that $F\from [0,1] \times [0,1] \to \calB$ is a homotopy from $g$ to $h$. Thus $g(x) = F(x,0)$ and $h(x) = F(x,1)$.
We say that $F$ is \emph{transverse} if whenever $F(x_0,t_0)$ is contained in a (1- or 2-) cell $C$ of $\calB$, we have that the \emph{trace} $F(x_0,[0,1])$ lies in $C$.
\end{definition}


\begin{claim}
\label{Clm:Induct}
For all $k \in [1,K)$:
\begin{enumerate}
\item 
\label{Itm:Trigon}
The region $R_k$ is a boundary trigon.  
\item 
\label{Itm:Bigon}
The union $B_k$ has exactly two corners and no cusps.  
\item 
\label{Itm:Homotopy}
There is a transverse homotopy $F_k$ taking $\gamma_k$ to $H(\gamma'_k)$. 
\end{enumerate}
\end{claim}

\begin{proof}
We will prove this by induction.  \refclm{Base} implies the base case (for $k = 1$) in a manner essentially identical to the general inductive step, so we omit its proof. 

Suppose that the hypotheses hold at step $k$.  Recall that $H(d_k)$ has type $\textsc{b}_1$ or $\textsc{b}_2$, so it has precisely one lower endpoint. Let $f_k$ be the face that contains the lower endpoint.  Let $p$ be the endpoint of $\gamma_k$, and let $e_k$ be the edge of $f_k$ containing $p$.  Let $\beta$ be the normal arc of $\gamma$ immediately after $p$.  Let $f_\beta$ be the face containing $\beta$. Viewed in a small neighbourhood of $e_k$, 
the faces $f_\beta$ and $f_k$ are on the same side (say the right side) of $e_k$, and $f_\beta$ is below $f_k$. 

Let $p'$ be the endpoint of $\gamma'_k$ meeting $b_k$.  By hypothesis \refitm{Homotopy}, the transverse homotopy $F_k$ takes $p$ to $H(p')$, with trace lying in $e_k$. Since $H$ is transverse to $e_k$ at $H(p')$, we deduce that $H(D)$ meets both $f_k$ and $f_\beta$ at $H(p')$. Thus $p'$ is a switch of $\tau$ with a cusp immediately below $b_k$, to the right of $p'$, pointing at $\gamma'_k$ (which extends to the left of $p'$).  This cusp lies in $R_{k+1}$, since $b_k$ is part of the boundary of $R_{k+1}$.  See \reffig{DownhillFromMaxBigon}. 

If $R_{k+1}$ has negative index then $k+1 = K$ and we have nothing to prove.  So suppose that $R_{k+1}$ has index zero.  Consulting \reftab{DiskIndices} we deduce that $R_{k+1}$ is a boundary trigon. 
This proves hypothesis \refitm{Trigon}.  Note that hypothesis \refitm{Bigon} follows because $B_k$ meets $R_{k+1}$ along $b_k$. 

Let $s_{k+1} = \bdy R_{k+1} - (d_{k+1} \cup b^\circ_k)$ be the remaining side of the boundary trigon $R_{k+1}$.  By \reflem{Minimality}\refitm{NormalPlus} there is at most one switch in the interior of $s_{k+1}$. Let $c'_{k+1} = s_{k+1} - b^\circ_{k+1} - \bdy D$.  Note that $\gamma'_{k+1} = \gamma'_k \cup c'_{k+1}$.

The path $H(s_{k+1})$ has endpoints $H(p')$ and the lower endpoint of $H(d_{k+1})$. The point $H(p')$ lies on the edge $e_k$. Recall that $f_{k+1}$ is the face containing the lower endpoint of $H(d_{k+1})$. There are two cases, depending on the type of $H(d_{k+1})$. 
\begin{itemize}
\item
Suppose that $H(d_{k+1})$ has type $\textsc{b}_1$.  Then $\gamma_{k+1} = \gamma_k$.  In this case, $e_k$ is a boundary edge of $f_{k+1}$. Since there is at most one switch in the interior of $s_{k+1}$, there are in fact no such switches. So $s_{k+1} = b_{k+1}$ and $c'_{k+1}$ is a single switch, equal to $p'$. We deduce that $\gamma'_{k+1} = \gamma'_k$.   Since $\gamma_{k+1} = \gamma_k$ and  $\gamma'_{k+1} = \gamma'_k$, we set $F_{k+1} = F_k$. See \reffig{B1}.

\item
Suppose that $H(d_{k+1})$ has type $\textsc{b}_2$. Then $\gamma_{k+1} = \gamma_k \cup c_{k+1}$.
Let $t_{k+1}$ be the tetrahedron containing $H(R_{k+1})$. In this case, the path $H(s_{k+1})$ must cross the bottom edge of $t_{k+1}$ in order to get into $f_{k+1}$. Since there is at most one switch in the interior of $s_{k+1}$, there is exactly one. Let $f$ be the other lower face of $t_{k+1}$. Thus $c_{k+1}$ is a normal arc in $f$. Note that $H(c'_{k+1})$ is a properly immersed arc in $f$, with endpoints on the same edges as those of $c_{k+1}$. Thus there is a transverse homotopy $E$ taking $c_{k+1}$ to $H(c'_{k+1})$. Reparametrising $E$, we set $F_{k+1} = F_k \cup E$. See \reffig{B2}.
\end{itemize}
This proves hypothesis \refitm{Homotopy}.
\end{proof}

\begin{figure}[htbp]
\vspace{-10pt}
\centering
\subfloat[$H(d_{k+1})$ has type $\textsc{b}_1$.]{
\labellist
\small\hair 2pt
\pinlabel $\gamma$ at 12 38
\pinlabel $d_{k+1}$ at 91 98
\pinlabel $b_k$ at 64 72
\pinlabel $b_{k+1}$ at 93 72
\pinlabel $p'$ at 33 5
\pinlabel $p$ at 45 46
\pinlabel $\epsilon_k$ at 32 26
\endlabellist
\includegraphics[width=0.45\textwidth]{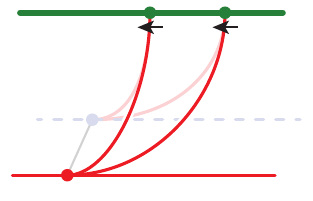}
\label{Fig:B1}
}
\quad
\subfloat[$H(d_{k+1})$ has type $\textsc{b}_2$.]{
\labellist
\small\hair 2pt
\pinlabel $\gamma$ at 12 38
\pinlabel $d_{k+1}$ at 91 98
\pinlabel $b_k$ at 64 72
\pinlabel $b_{k+1}$ at 95 72
\pinlabel $p'$ at 33 5
\pinlabel $p$ at 45 46
\pinlabel $c'_{k+1}$ at 53 4
\pinlabel $\epsilon_{k+1}$ at 73 25.6
\pinlabel $\epsilon_k$ at 32 26
\endlabellist
\includegraphics[width=0.45\textwidth]{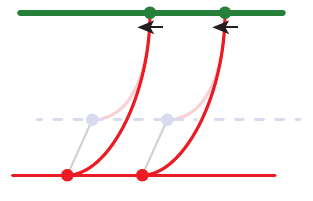}
\label{Fig:B2}
}
\caption{Extending the transverse homotopy $F_k$. As in \reffig{DownhillFromMaxBigon}, we have omitted applying $H$ to labels of subsets of $D$.}
\label{Fig:ExtendingTransverseHomotopy}
\end{figure}

Let $B_\triangleright = B_{K-1}$. This is the \emph{right-bigon} for $R_0$. We rerun the argument of \refclm{Induct} to the left to obtain the \emph{left-bigon} for $R_0$, denoted by $B_\triangleleft$. 

The induction in the proof of \refclm{Induct} extends to show that $R_K$ contains a cusp pointing at $\gamma'_{K-1}$. The cusp lies between $b_{K-1}$ and another branch on the boundary of $R_K$, which we call $c'_{K}$.  See the far right of \reffig{DownhillFromMaxBigon}. Let $Q_\triangleright$ be a small closed regular neighbourhood of $b_{K-1}$ in $R_K$. The boundary of the subregion $Q_\triangleright$ has four sides; we call it a \emph{right-quadrilateral}. The four sides are $d_K \cap N, b_{K-1}, c_K \cap N,$ and a fourth side, $s_\triangleright$ say. Note that $s_\triangleright$ is properly embedded in $R_K$. The quadrilateral $Q_\triangleright$ therefore has one cusp and three corners, and so it has index $-1/4$. 
An identical argument builds the \emph{left-quadrilateral} $Q_\triangleleft$.

Let $S(R_0) = Q_\triangleleft \cup B_\triangleleft \cup B_\triangleright \cup Q_\triangleright$.

\begin{claim}
\label{Clm:Rectangles}
For any max-bigons $R$ and $R'$
\begin{enumerate}
\item 
\label{Itm:Embedded}
$S(R)$ is embedded in $D$,
\item 
\label{Itm:Disjoint}
if $R\neq R'$ then $S(R)$ and $S(R')$ are disjoint, and 
\item 
\label{Itm:Rectangle}
$S(R)$ is a rectangle.  
\end{enumerate}
\end{claim}

\begin{proof}
Let $B_\triangleleft$ and $B_\triangleright$ be the right- and left-bigons for $R$; define $B'_\triangleleft$ and $B'_\triangleright$ similarly for $R'$. Note that boundary trigons in $B_\triangleleft$ have transverse orientations on their branches that disagree with the tangential orientation on $\bdy D$. On the other hand, boundary trigons in $B'_\triangleright$ have transverse orientations that agree with the tangential orientation. 

This proves that $B_\triangleleft$ and $B_\triangleright$ share only one region: the max-bigon itself, and so $B= B_\triangleleft \cup B_\triangleright$ is again a boundary bigon. The same argument shows that $B$ and $B' = B'_\triangleleft \cup B'_\triangleright$ have no regions in common if $R\neq R'$.

We claim that $\bdy B$ and $\bdy B'$ are disjoint. To see this, note that $\bdy B$ consists of an arc in $\bdy D$, and an arc in $\tau$. The transverse orientation on the arc in $\tau$ points into $B$, and similarly for $B'$. 

Let $Q_\triangleleft$, $Q_\triangleright$, $Q'_\triangleleft$ and $Q'_\triangleright$ be the quadrilaterals for $R$ and $R'$.  Since these are obtained by taking subsets of small regular neighbourhoods of branches in $\bdy B$ and $\bdy B'$, these are all pairwise disjoint (if $R\neq R'$). This proves parts \refitm{Embedded} and \refitm{Disjoint}.

Adding the subregions $Q_\triangleleft$ and $Q_\triangleright$ replaces the two corners of $B$ with four corners, and thus $S(R)$ is a rectangle, and we obtain \refitm{Rectangle}.
\end{proof}


Let $D' = D - \cup S(R)$, where the union ranges over all max-bigons $R$.
\begin{claim}
The induced cellulation of $D'$ has no regions of positive index.
\end{claim}

\begin{proof}
Suppose that $R'$ is a region of $D'$ having positive index.  
If $R'$ is a non-complete nullgon, or monogon, then $R'$ is also a region of $D - \tau$, contradicting \reflem{Minimality}. 
Also, $R'$ is not a boundary bigon since we removed them all.  
Thus $R'$ was created by cutting quadrilaterals out of some region $R$ of $D - \tau$.  
Note that $R'$ meets $\tau$, meets $\bdy D$ and meets $\bdy Q_\triangleright$ (say) along some side $s_\triangleright$. 
So $R'$ has at least three corners.  
Since its index is positive, $R'$ has exactly three corners.  
Thus $R = R' \cup Q_\triangleright$ is a boundary trigon, contradicting \refclm{NotTrigon}.
\end{proof}

Note that $D'$ has both outward and \emph{inward} corners (a combinatorial version of the exterior angle being $3\pi/2$).  Again following ~\cite[page~57]{Mosher03}, we generalise our definition of index; each inward corner adds $+1/4$ to the overall index.  Thus $D'$ has non-positive index.  Since rectangles have index zero, from the additivity of index we deduce that $D$ has non-positive index, a contradiction. This concludes the proof of \refthm{NormalLoopsEssential}. 
\end{proof}

\renewcommand{\UrlFont}{\tiny\ttfamily}
\renewcommand\hrefdefaultfont{\tiny\ttfamily}

\bibliographystyle{plainurl}
\bibliography{../bibfile.bib}
\end{document}